\newtheorem{theorem}{Theorem}[section]
\newtheorem{lemma}[theorem]{Lemma}
\newcommand{\pr}{\mathbb{P}}
\newcommand{\jnote}[1]{}
\newcommand{\E}{{\mathbb E}}
\newcommand{\dist}{\mathsf{dist}}
\newcommand{\remove}[1]{}
\newcommand{\e}{\varepsilon}
\newcommand{\Aut}{\mathsf{Aut}}
\def\P{\mathbb{P}}
\begin{document}

\title{{\bf A Gaussian upper bound for \\ martingale small-ball probabilities}}

\author{James R. Lee\thanks{Computer Science, University of Washington.  Partially supported by NSF grant CCF-1217256.} \and Yuval Peres\thanks{Microsoft Research} \and Charles K. Smart\thanks{Mathematics, MIT}}

\date{}

\maketitle

\begin{abstract}
Consider a discrete-time martingale $\{X_t\}$ taking values in a Hilbert space $\mathcal H$.
We show that if for some $L \geq 1$, the bounds
$\E \left[\|X_{t+1}-X_t\|_{\mathcal H}^2 \mid X_t\right]=1$ and $\|X_{t+1}-X_t\|_{\mathcal H} \leq L$
are satisfied for all times $t \geq 0$, then there is a constant $c = c(L)$ such that for $1 \leq R \leq \sqrt{t}$,
\[\pr(\|X_t-X_0\|_{\mathcal H} \leq R) \leq c \frac{R}{\sqrt{t}}\,.\]
Following [Lee-Peres, Ann.~Probab.\ 2013], this estimate has applications
to small-ball estimates for random walks on vertex-transitive graphs:
We show
that for every infinite, connected, vertex-transitive graph $G$ with bounded degree,
there is a constant $C_G > 0$ such that if $\{Z_t\}$ is the simple random walk on $G$, then
for every $\e > 0$ and $t \geq 1/\varepsilon^2$,
$$
\P\left(\vphantom{\bigoplus}\dist_G(Z_t,Z_0) \leq \e \sqrt{t}\right) \leq C_G\, \varepsilon\,,
$$
where $\dist_G$ denotes the graph distance in $G$.
\end{abstract}

\section{Introduction}

Let $\mathcal H$ be a Hilbert space with inner product $\langle \cdot,\cdot \rangle$ and norm $\|\cdot\|$, and
let $\{X_n : n \geq 0\}$ denote a discrete-time $\mathcal H$-valued martingale with respect to a filtration $\{\mathcal F_n\}$.
Suppose that for some number $L > 1$ and all $n \geq 1$, we have $\|X_{n}-X_{n-1}\| \leq L$ almost surely.
In addition,
suppose that the conditional variance $V_n = \E\left[\|X_{n}-X_{n-1}\|^2 \mid \mathcal F_{n-1}\right]$ satisfies $V_n \geq 1$ almost surely.  As discovered by A. G. \`Ershler, martingales satisfying these
conditions arise in the study of random walks on groups, as we discuss shortly.


Given these almost sure bounds on the conditional variances, one might expect some type of martingale central limit theorem to hold.  In fact, this is hopelessly false.  Such martingales can exhibit counterintuitive behavior even in the 1-dimensional case.  This phenomenon is suggested by
solutions to certain PDE arising in nonlinear filtration \cite{BS69}.

The authors of \cite{GPZ13} confirm this surprising
 behavior in the discrete setting:  For every $t \geq 1$, there is a real-valued martingale $\{X_n\}$ satisfying the above assumptions, the intial condition $X_0 = 0$, and the estimate
\[
\P(|X_t| \leq 1) \geq c t^{-\alpha},
\]
where the constants $c > 0$ and $0 < \alpha < 1/2$ are independent of $t$.  In other words, even under seemingly strong
upper and lower bounds on the increments, $X_t$ can land near the origin with
probability much greater than the order $t^{-1/2}$ achieved by simple random walk.  The moral is that, by allowing the conditional variance $V_n$ to depend on the state $(X_{n-1}, n)$, a clever controller can steer the random walk closer to small sets.

Our primary goal is to prove that, for $\mathcal H$-valued martingales,
such ``non-Gaussian'' behavior cannot happen if the sequence $\{V_n\}$ is {\em deterministic}.  We prove the small-ball estimate
\begin{equation*}
\P[\|X_n\| \leq R] \leq \frac{c_L R}{\sqrt n} \quad \mbox{for } 1 \leq R \leq \sqrt n,
\end{equation*}
when $X_0 = 0$ and the $V_n \geq 1$ are deterministic.

Note that even in this more restricted case, there is no central limit theorem.  Indeed, the increments $X_{n+1} - X_n$ can lie in different subspaces at different times.
Choosing the direction of $X_{n+1}-X_n$ allows one to
control the conditional variance of the martingale projected
onto a fixed direction.
One might suspect that this gives a controller the
ability to again substantially increase the probability
of the martingale to be near the origin at a target time $t$
as in \cite{GPZ13}.
Our main result is that this is not the case.

We use a coupling argument to reduce to the two-dimensional case, where the ratio of area to perimeter is favorable, and then argue by induction.  If one instead takes a control-theoretic approach, optimizing the increments to minimize the small-ball probability, this leads to the work of Armstrong and Zeitouni \cite{AZ14}, which we describe in more detail below.

Our main theorem includes off-diagonal estimates as well, and these are needed in the induction step.  Of course, when $\|x_0\|$ is large, this estimate is an easy consequence of Azuma's inequality.

\begin{theorem}\label{thm:martingale}
Let $\{X_n\}$ be an $\mathcal H$-valued martingale with respect to the filtration $\{\mathcal F_n\}$ and suppose
there exists a sequence of numbers $\{v_n \geq 1 : n \geq 1\}$ such that for each $n \geq 1$,
almost surely $\E \left[\|X_n - X_{n-1}\|^2 \mid \mathcal F_{n-1}\right] = v_n$ and $\|X_n-X_{n-1}\| \leq L$.
Then for every $n \geq 1$ and $1 \leq R \leq \sqrt{n}$,
we have
\[
\P(\|X_n\| \leq R \mid X_0 = x_0) \leq \frac{c_L R}{\sqrt{n}} e^{-\|x_0\|^2/(6L^2 n)}\,,
\]
where $c_L > 0$ is a constant depending only on $L$.
\end{theorem}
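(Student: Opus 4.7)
The plan is to follow the two-step strategy outlined in the introduction: first, reduce the general Hilbert-space statement to the planar case $\mc H = \mathbb R^2$ via a coupling argument, and then establish the 2D estimate by induction on $n$. The Gaussian off-diagonal factor $e^{-\|x_0\|^2/(6L^2 n)}$ is what makes the inductive step close, since after conditioning on the martingale at an intermediate time one is left with a convolution of this factor against the small-ball probability at the intermediate time.

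The reduction to 2D would be carried out by constructing, for any $\mc H$-valued martingale $\{X_n\}$ satisfying the hypotheses, a coupled $\mathbb R^2$-valued martingale $\{Y_n\}$ starting at $Y_0 = (\|x_0\|, 0)$, with the same deterministic variance sequence $v_n$, bounded increments $\|Y_n - Y_{n-1}\| \leq L$, and $\|Y_n\| \leq \|X_n\|$ almost surely. The construction is inductive: at each step, one projects the $\mc H$-valued increment onto the directions parallel and perpendicular to $Y_{n-1}$, and lifts the perpendicular component into the plane, choosing signs by convexity of $\|\cdot\|^2$ so that norm domination holds. Once $\|Y_n\| \leq \|X_n\|$ is secured almost surely, the small-ball estimate for $Y_n$ transfers directly to $X_n$.

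For the 2D induction on $n$, the base case for $n$ below a constant $n_0(L)$ is trivial upon taking $c_L$ large enough. For the inductive step, I would condition on $X_m$ at an appropriately chosen intermediate time $m$ (roughly $m \approx 3n/4$, as motivated below), and apply the tower property
\[
\P(\|X_n\| \leq R \mid X_0 = x_0) = \E\bigl[\P(\|X_n\| \leq R \mid \mc F_m) \bigm| X_0 = x_0\bigr].
\]
Given $\mc F_m$, the process $X_n - X_m$ is a martingale over $n - m$ steps with deterministic variance sequence $(v_{m+1},\ldots,v_n)$, so the inductive hypothesis bounds the inner probability by $(c_L R/\sqrt{n-m})\, e^{-\|X_m\|^2/(6L^2(n-m))}$, after the translation identifying $B(0,R)$ with a ball centered at $-X_m$ relative to the shifted starting point. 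Closing the induction reduces to the Laplace-transform estimate
\[
\E\bigl[e^{-\|X_m\|^2/(6L^2(n-m))} \bigm| X_0 = x_0\bigr] \leq \sqrt{\tfrac{n-m}{n}}\; e^{-\|x_0\|^2/(6L^2 n)}.
\]
For the Gaussian heuristic (increments replaced by Gaussians of variance $v_k$) this identity becomes an explicit planar integral that is sharp at $m = 3n/4$, which explains the constants. For the actual discrete martingale one proves the Laplace bound by constructing a supermartingale adapted to the variance sum $V_k = v_1 + \cdots + v_k$, of heat-kernel type such as $(c + 2(V_n - V_k))^{-1}\exp(-\|X_k\|^2/(c + 2(V_n - V_k)))$; the supermartingale property follows from a direct conditional computation using $\E[\|X_k - X_{k-1}\|^2 \mid \mc F_{k-1}] = v_k$ together with $\|X_k - X_{k-1}\| \leq L$ to control the higher-order Taylor remainder.

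The main obstacle is that for non-Gaussian increments the supermartingale inequality has strict slack, and this slack must be absorbed into the constant $6$ in the Gaussian exponent and the factor $\sqrt{(n-m)/n}$ coming from the prefactor. Balancing these so that the induction closes uniformly in $n, R$, and $x_0$ requires a careful choice of $m$ and of $c_L$ in terms of $L$; in particular, $c_L$ must dominate the base case up to the time scale at which the inductive step takes over. The 2D reduction is also delicate, since the coupling must preserve both the deterministic variance $v_n$ and the bounded-increment property at every step; the critical point is that two dimensions provide exactly enough room to encode the radial/tangential decomposition needed for norm domination, whereas in one dimension one loses the angular freedom entirely. The favorable area-to-perimeter ratio of disks in $\mathbb R^2$ is what makes the heat-kernel supermartingale argument, and hence the entire induction, viable in this dimension.
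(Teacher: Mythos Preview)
Your induction hinges on the Laplace estimate
\[
\E\bigl[e^{-\|X_m\|^2/(6L^2(n-m))}\,\big|\,X_0=x_0\bigr]\;\le\;\sqrt{\tfrac{n-m}{n}}\;e^{-\|x_0\|^2/(6L^2 n)},
\]
to be obtained from a heat-kernel supermartingale. This is exactly the step the paper flags as impossible in its ``Remarks on the proof'': one \emph{cannot} uniformly dominate $\|X_m\|$ by a Gaussian. Take the planar martingale with $X_0=0$, $\|X_{k+1}-X_k\|=1$, and $X_{k+1}-X_k\perp X_k$; then $\|X_m\|^2=m$ deterministically. With $L=1$ and your choice $m=3n/4$, the left side equals $e^{-1/2}\approx 0.607$ while the right side is $1/2$, so the inequality fails. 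The same example shows your proposed quantity $(c+2(V_n-V_k))^{-1}\exp\bigl(-\|X_k\|^2/(c+2(V_n-V_k))\bigr)$ is \emph{not} a supermartingale: along this trajectory it is a deterministic function of $k$ that is increasing for $k<(c+2n)/4$. The ``higher-order Taylor remainder'' you hope to absorb is not lower order here; for orthogonal increments the cross term $\langle X_k,D\rangle$ vanishes and the entire drift comes from $\|D\|^2$, which has the wrong sign for the supermartingale property in the early phase.

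The paper circumvents this by abandoning the single-intermediate-time scheme. It first proves (Lemma~\ref{lem:main}) a bound conditional on the \emph{entire trajectory} being controlled, namely $\|X_{n-t}\|\le t^{5/8}+\lambda$ for all $t$; on this event one can run the Gaussian-type induction step-by-step because $\|X_{n-k}\|$ is forced to be small relative to the remaining time. The full small-ball probability is then decomposed into the controlled-trajectory event plus a union over the first time $k$ at which $\|X_{n-k}\|>k^{5/8}+1$; each such term is handled by covering the $\mathbb R^2$ disk of radius $kL+1$ with a net of $O(k^2L^2)$ unit balls (this is where the dimension reduction to $\mathbb R^2$ is actually used), applying the inductive hypothesis at radius $1$ to each net point, and multiplying by the Azuma probability $e^{-k^{1/4}/(2L^2)}$ of returning from distance $k^{5/8}$. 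The exponent $5/8$ is chosen so that this Azuma factor kills the polynomial net size. Your dimension-reduction sketch is in the right spirit (the paper's Lemma~\ref{lem:KS} in fact gives $\|Y_n\|=\|X_n\|$ exactly), but the core inductive mechanism you propose does not close.
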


\medskip
\noindent
{\bf Remarks on the proof.}
The delicacy required to prove Theorem \ref{thm:martingale}
lies in the fact that one cannot uniformly dominate $\|X_n\|$
by a Gaussian in order to apply the natural induction.
Here, uniformly refers to a bound that holds simultaneously
for all martingales satisfying the Lipschitz and conditional variance conditions.
For instance, consider the two-dimensional martingale such that
$X_0=0$, and for $n \geq 0$, it holds that $\|X_{n+1}-X_n\|=1$ and $X_{n+1}-X_n$ is orthogonal to $X_n$.
In that case, $\|X_n\|^2=n$ (with probability one) for all $n \geq 0$.

Instead, our proof proceeds in two steps.  First, we prove that a martingale
cannot aim for the origin at time $n$ using a controlled trajectory, e.g. such
that $\|X_{n-t}\| \leq t^{5/8} + O(1)$ for all $1 \leq t \leq n$.
Given an uncontrolled trajectory, we break it the union of a smaller
trajectory (with fewer time steps) and an uncontrolled piece.
The smaller trajectory is bounded by induction, and the uncontrolled piece
by large deviation bounds.

The final step is
to take a union bound over a discretization of the space of all possible trajectories.
Since we have no a priori bound on the dimension of $\mathcal H$, this seems infeasible.
Here is where we employ martingale dimension reduction:  We couple our $\mathcal H$-valued
martingale to an $\mathbb R^2$-valued martingale with the same Lipschitz and conditional
variance conditions.  The union bounds thus becomes possible.

\medskip

Using the methods of \cite{LP13}, Theorem \ref{thm:martingale} can be
used to obtain a diffusive estimate for random walks on finitely-generated groups and, more generally, vertex-transitive graphs.  In particular,
the following result is proved in Section \ref{sec:groups} using Theorem \ref{thm:martingale}.

\begin{theorem}
For every infinite, locally-finite, connected, vertex-transitive graph $G$,
there is a constant $C_G > 0$ such that if $\{Z_t\}$ is the simple random walk on $G$, then
for every $\e > 0$ and every $t \geq 1/\varepsilon^2$,
$$
\P\left(\vphantom{\bigoplus}\dist_G(Z_t,Z_0) \leq \e \sqrt{t}\right) \leq C_G\, \varepsilon\,,
$$
where $\dist_G$ denotes the graph distance in $G$.
\end{theorem}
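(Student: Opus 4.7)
The plan is to deduce the claimed small-ball estimate from Theorem~\ref{thm:martingale} by lifting the random walk to a Hilbert-space martingale via an equivariant harmonic embedding, following the strategy of \cite{LP13}. Fix an infinite, locally-finite, connected, vertex-transitive graph $G$. The key ingredient imported from \cite{LP13} is the existence of a Hilbert space $\mathcal H$ and an $\Aut(G)$-equivariant map $\Phi\colon V(G)\to\mathcal H$ that is $K$-Lipschitz with respect to $\dist_G$ (for some $K=K(G)>0$) and harmonic for the random walk, meaning $\Phi(v)=\E[\Phi(Z_1)\mid Z_0=v]$ for every vertex $v$. A nondegeneracy estimate from the same construction ensures that $\sigma^2 := \E[\|\Phi(Z_1)-\Phi(Z_0)\|^2\mid Z_0=v] > 0$, and by equivariance combined with vertex-transitivity this quantity is the same at every $v$.

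Next, set $X_t := \sigma^{-1}\Phi(Z_t)$. Harmonicity makes $\{X_t\}$ an $\mathcal H$-valued martingale with respect to the filtration generated by the walk. Two properties let us apply Theorem~\ref{thm:martingale}. First, $\E[\|X_{t+1}-X_t\|^2\mid\mathcal F_t] = 1$ almost surely, so the conditional variances form a deterministic (constant) sequence---this is exactly where vertex-transitivity is used. Second, $\|X_{t+1}-X_t\|\le K/\sigma=:L$ almost surely, since $\dist_G(Z_t,Z_{t+1})=1$ and $\Phi$ is $K$-Lipschitz. Theorem~\ref{thm:martingale} then yields, for every $1\le R\le\sqrt t$,
\[
\P\!\left(\|X_t-X_0\|\le R\right) \;\le\; \frac{c_L\,R}{\sqrt t}.
\]

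Finally, translate back using $\{\dist_G(Z_t,Z_0)\le\e\sqrt t\}\subseteq\{\|X_t-X_0\|\le (K/\sigma)\e\sqrt t\}$. If $(K/\sigma)\e\sqrt t\in[1,\sqrt t]$, taking $R=(K/\sigma)\e\sqrt t$ produces a bound of $c_L(K/\sigma)\e$. If $(K/\sigma)\e\sqrt t<1$, taking $R=1$ and using the hypothesis $t\ge 1/\e^2$ gives $c_L/\sqrt t\le c_L\e$. If $(K/\sigma)\e\sqrt t>\sqrt t$, the trivial bound of $1$ is already dominated by $C_G\e$ once $C_G\ge K/\sigma$. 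In all cases we obtain the claim for a suitable constant $C_G$ depending only on $G$.

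The principal obstacle is entirely packaged into the first step: construction of an equivariant, Lipschitz, harmonic embedding of an arbitrary vertex-transitive graph into a Hilbert space with uniformly positive per-step variance. This is precisely the embedding lemma of \cite{LP13}, which can be invoked as a black box. Once that input is available, vertex-transitivity supplies the deterministic conditional variance needed by Theorem~\ref{thm:martingale}, and the remainder is a routine rescaling together with a case split on the parameter $R$.
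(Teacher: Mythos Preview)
Your reduction to Theorem~\ref{thm:martingale} via an equivariant harmonic embedding is exactly the paper's strategy in the amenable case (Theorem~\ref{thm:nonamenable}), and your rescaling and case split on $R$ are fine. The issue is that you invoke the embedding lemma of \cite{LP13} for an \emph{arbitrary} infinite vertex-transitive graph, and this is not available. The construction in \cite{LP13} (cited here as \cite[Thm.~3.1]{LP13}) produces a non-constant $\Aut(G)$-equivariant harmonic map into a Hilbert space only when $G$ is amenable; indeed, for graphs whose automorphism group has Kazhdan's property~(T), every such equivariant harmonic map is constant, so $\sigma^2=0$ and your normalization $X_t=\sigma^{-1}\Phi(Z_t)$ breaks down.

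The paper deals with this by first splitting off the non-amenable case with a one-line spectral argument: if the spectral radius is $\rho<1$, then $\P(\dist_G(Z_0,Z_t)\le \e\sqrt t)\le d^{\e\sqrt t}\rho^t$, which is at most $C_{\rho,d}\,\e$ for $t\ge 1/\e^2$. Only then does it run your harmonic-embedding argument on the remaining amenable graphs. So your proof is correct once you add this case distinction; as written, the black-box appeal to \cite{LP13} in full generality is the gap.
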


For the preceding theorem, one only requires the case $v_1=v_2=\cdots=1$ in Theorem \ref{thm:martingale}.
We also prove a related theorem for finite vertex-transitive graphs; see Theorem \ref{thm:finitegraphs} below.  In that setting,
one needs more general deterministic sequences $\{v_n\}$.

\medskip

Finally, we remark that Theorem \ref{thm:martingale} is a corollary of the main theorem of the independent and concurrent work of Armstrong and Zeitouni \cite{AZ14}.  As written, their result only applies to the case $R=1$, $v_1 = v_2 = \cdots = 1$, and $x_0 = 0$, but one expects
that they can obtain the general case by straightforward modification.

\section{Martingale small-ball probabilities}

We recall the setup of the introduction,
where $\mathcal H$ is a Hilbert space with inner product $\langle \cdot,\cdot \rangle$ and norm $\|\cdot\|$.
The process
$\{X_n : n \geq 0\}$ will denote a discrete-time $\mathcal H$-valued martingale with respect to a filtration $\{\mathcal F_n\}$.
We use the notation $\E_n\left[\cdot\right] = \E\left[ \cdot \mid \mathcal F_n\right]$ and $\P_n\left[\cdot\right] = \P\left[ \cdot \mid \mathcal F_n\right]$.
For the remainder of the section, we will assume that $\{X_n\}$ satisfies the the following two properties:
\begin{enumerate}
\item[(M1)] There is a (deterministic) sequence of numbers $\{v_n : n \geq 1\}$ such that for all $n \geq 1$, we
 have $v_n \geq 1$ and $\E_{n-1} \|X_{n}-X_{n-1}\|^2 = v_n$.
\item[(M2)] For all $n \geq 1$, $\|X_{n}-X_{n-1}\| \leq L$ almost surely.
\end{enumerate}
Note that the conjunction of (M1) and (M2) imply that $L \geq 1$.

We first prove an estimate assuming a martingale approaches the origin in a controlled manner.

\begin{lemma}\label{lem:main}
There is a universal constant $c > 0$ such that
for every $\lambda \geq 1$ and all $n \geq 1$,
the following holds:  If $\{X_n\}$ is any martingale satisfying (M1) and (M2) and $X_0=x_0$, then
\begin{equation}\label{eq:control}
\P\left[ \|X_n\| \leq 1 \textrm{ and } \|X_{n-t}\| \leq t^{5/8} + \lambda \textrm{ for all } 0 \leq t \leq n\right] \leq \frac{c L^{13} \lambda^{4/5}}{\sqrt{n}} \exp\left({\frac{-\|x_0\|^2}{2L^2 n}}\right)\,.
\end{equation}
\end{lemma}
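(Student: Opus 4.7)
My plan is to proceed by induction on $n$. The base case of bounded $n$ is immediate from Azuma's inequality, since the right-hand side can be arranged to exceed $1$ by taking the constant $c$ large. For the inductive step, I first invoke the dimension-reduction coupling announced in the introduction to replace the $\mathcal{H}$-valued martingale by an $\mathbb{R}^{2}$-valued one with the same Lipschitz and conditional-variance bounds; this is essential, since the net argument below would otherwise depend on $\dim\mathcal{H}$.

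Next I pick an intermediate time $m=n-t$, with $t$ to be optimized (expected to be a power of $n$), and split via the tower property:
\[
\P[\|X_{n}\|\leq 1,\ \text{controlled}]
=\E\!\left[\mathbf{1}_{\text{controlled on }[0,m]}\,\P_{m}\!\left[\|X_{n}\|\leq 1,\ \text{controlled on }[m,n]\right]\right].
\]
On the event of interest the controlled condition forces $X_{m}$ into a planar disk of radius $t^{5/8}+\lambda$, which I cover by a net $\mathcal{N}$ at scale $\sqrt{t}$, of cardinality $O((t^{5/8}+\lambda)^{2}/t)$. For each $y\in\mathcal{N}$ the inner conditional probability is exactly the quantity the lemma bounds, applied to the subprocess of $t$ steps starting at $X_{m}\approx y$; by the inductive hypothesis it is at most $cL^{13}\lambda^{4/5}t^{-1/2}\exp(-\|y\|^{2}/(2L^{2}t))$.

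The outer probability $\P[X_{m}\in B(y,\sqrt{t}),\ \text{controlled on }[0,m]]$ is bounded by applying the inductive hypothesis on $[0,m]$ after recentering the target at $y$ and inflating the parameter to $\lambda'=\lambda+O(t^{5/8})$ (to absorb the weaker controlled condition on the segment of $[0,m]$ near time $m$); this yields a bound of the form $cL^{13}(\lambda')^{4/5}m^{-1/2}\exp(-\|x_{0}-y\|^{2}/(2L^{2}m))$. Summing over $\mathcal{N}$, the two Gaussian weights combine via the standard Gaussian convolution identity to produce $\exp(-\|x_{0}\|^{2}/(2L^{2}n))$, and balancing the net cardinality $(t^{5/8}+\lambda)^{2}/t$ against the $t^{-1/2}\cdot m^{-1/2}$ gain and the $(\lambda')^{4/5}\lambda^{4/5}$ cost forces the optimal choice of $t$; the exponents $5/8$ and $4/5$ are precisely those that make these estimates close up under the induction.

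The main obstacles I anticipate are (i) strengthening the inductive statement so that the ``endpoint near $y$'' translation is covered (most likely by allowing an arbitrary target center and then specializing back to $0$), (ii) checking that the inflation $\lambda\to\lambda+O(t^{5/8})$ does not destroy the induction — this is precisely why the $\lambda$-exponent must be sublinear, and $4/5$ is the value that survives the recursion — and (iii) carrying out the dimension reduction cleanly so that the conditional-variance hypothesis on the $\mathbb{R}^{2}$-valued image is preserved.
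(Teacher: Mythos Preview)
Your plan conflates the proof of this lemma with the proof of the small-ball theorem that \emph{uses} it.  In the paper, the lemma is proved by a one-step backward induction: one defines
\[
\Psi_k=\P_{n-k}\bigl[\|X_n\|\le 1\text{ and }\|X_{n-t}\|\le t^{5/8}+\lambda\text{ for }0\le t\le k\bigr],
\]
posits a Gaussian majorant $\Psi_k\le \beta_k\,e^{-\|X_{n-k}\|^2/(2s_k)}$ for explicit deterministic sequences $\beta_k,s_k$, and verifies this by writing $\Psi_k\le\E_{n-k}[\Psi_{k-1}]$, expanding $e^{-\|X_{n-k}+D\|^2/(2s_{k-1})}$ to second order in the increment $D$, and using $\E_{n-k}\|D\|^2=v_{n-k+1}$ together with the controlled bound $\|X_{n-k}\|\le k^{5/8}+\lambda$ to control the error terms.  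There are no nets and no dimension reduction here; those tools appear only later, when the lemma is fed into the induction for the unrestricted small-ball probability.

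Your scheme, by contrast, invokes the inductive hypothesis on \emph{both} the inner segment $[m,n]$ and the outer segment $[0,m]$, picking up the constant $c$ twice.  The recursion then reads, schematically,
\[
P(n)\ \lesssim\ |\mathcal N|\cdot \frac{c\,\lambda^{4/5}}{\sqrt t}\cdot \frac{c\,(\lambda')^{4/5}}{\sqrt m},
\]
and you need this to be at most $c\,\lambda^{4/5}/\sqrt n$.  No choice of $t$ makes this close: the extra factor of $c$ cannot be absorbed by combinatorics that are independent of $c$, and $c$ must be large to cover the base case.  This is exactly why, in the paper's structure, one of the two pieces is bounded by something proved \emph{outside} the induction (namely this lemma, or Azuma when the walk is far from the origin).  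There is also a more local gap: the inductive hypothesis bounds the probability of hitting a ball of radius~$1$, but you apply it to $\P[X_m\in B(y,\sqrt t)]$ without accounting for the factor-of-$t$ discrepancy in area; rescaling is unavailable because the lower bound $v_k\ge 1$ does not survive it.

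The obstacles you list (recentering, the inflation $\lambda\mapsto\lambda+O(t^{5/8})$, dimension reduction) are real, but they are not the binding constraint.  The missing idea is a direct analytic bound on the controlled event---obtained by propagating a Gaussian supersolution one step at a time---that does not itself rely on the induction you are trying to close.
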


\begin{proof}
Fix $n \geq 1$, and for $1 \leq k \leq n$, define the random variable
\[\Psi_k = \P_{n-k}\left[\|X_n\| \leq 1 \textrm{ and } \|X_{n-t}\| \leq t^{5/8} + \lambda \textrm{ for all } 0 \leq t \leq k\right]\,.\]

Let $k_0 = \lceil 30 L^{24} \lambda^{8/5} \rceil$.
Now define, for $1 \leq k \leq n$, the sequence
\begin{equation}\label{eq:seqn}
s_k = L^2 \min(k,k_0) +
\sum_{j=k_0}^{k-1} v_{n-j}\,.
\end{equation}

We will prove by induction on $k$ that for all $1 \leq k \leq n$, the
following bound holds almost surely:
\begin{equation}\label{eq:hypoth}
\Psi_k \leq e^{-\frac{\|X_{n-k}\|^2}{2 s_k}} \beta_k\,,
\end{equation}
where
\begin{equation}\label{eq:beta}
\beta_k= e^2\prod_{j=k_0+1}^k \left(1- \frac{s_j-s_{j-1}}{2 s_j} + 77 L^3 j^{-9/8}\right)\,.
\end{equation}
We take the product to be $1$ if $k \leq k_0$.

Clearly $\beta_{k} = e$ for $k \leq k_0$.  For $k > k_0$, using the fact that $\log(1+x) \leq x$ for $x > -1$, we will have
\begin{eqnarray*}
\log \beta_k &\leq& 2 - \frac12 \sum_{j=k_0+1}^{k} \frac{s_j-s_{j-1}}{s_j} +  77 L^3 \sum_{j=k_0+1}^k j^{-9/8} \\
&\leq & 2  - \frac12 \sum_{j=k_0+1}^{k} \int_{s_{j-1}}^{s_j} \frac{1}{x+L^2}\,dx  + O(1) L^3 k_0^{-1/8} \\
&\leq & O(1) - \frac12 \log \left(\frac{s_k+L^2}{s_{k_0}+L^2}\right)\,,
\end{eqnarray*}
where we have used the fact that $k_0$ is chosen large enough
so that $L^3 k_0^{-1/8}$ is bounded above by a universal constant.

From \eqref{eq:seqn}, we know that $s_{k_0} = k_0 L^2$ and from (M1), we have \[s_k \geq L^2 \min(k,k_0) + k-k_0 \geq k\,,\] hence we conclude that for all $1 \leq k \leq n$,
$$
\beta_k \leq O(1) \frac{\sqrt{k_0+1}\cdot L}{\sqrt{k}} \leq O(1) \frac{L^{13} \lambda^{4/5}}{\sqrt{k}}\,.
$$
Combining this with \eqref{eq:hypoth} and using the fact that $s_n \leq L^2 n$ yields \eqref{eq:control}.  (Observe
that $\Psi_n$ is precisely what we are trying to bound in \eqref{eq:control}.)

\medskip

Thus we are now left to prove \eqref{eq:hypoth} by induction on $k$.
First consider the case $1 \leq k \leq k_0$.  If $\|X_{n-k}\| > k L + 1$, then $\Psi_{k}=0$
almost surely,
because the Lipschitz bound (M2) implies that the chain can move at most $kL$ distance
in $k$ steps.
We may also assume that $\|X_{n-k}\| > 1$, else the bound is trivially true.  In particular, if we define the event
\begin{equation*}
\mathcal E = \{ 1 < \| X_{n-k} \| \leq k L + 1 \},
\end{equation*}
then
\begin{equation*}
\P\left(\Psi_k \mid \mathcal E^c \right) \leq e^2 \cdot e^{\frac{-\|X_{n-k}\|^2}{2 k L^2}}.
\end{equation*}

On the event $\mathcal E$,  we can apply
Azuma's inequality
to the $1$-dimensional martingale
\[\left\{\left\langle X_{n-k}-X_t,
\tfrac{X_{n-k}}{\|X_{n-k}\|}
\right\rangle : t=n-k, n-k+1, \ldots, n\right\}\] to conclude
that almost surely,
\begin{eqnarray*}
\P\left(\Psi_{k} \mid \mathcal E\right) &\leq&
\P\left(\vphantom{\bigoplus} \|X_n\| \leq 1 \mid \mathcal E\right) \\
&\leq &
\P\left(\left\langle \frac{X_{n-k}}{\|X_{n-k}\|}, X_{n-k} - X_n\right\rangle > \|X_{n-k}\|-1 \mid \mathcal E\right)
\\
&\leq &
e^{-\frac{(\|X_{n-k}\|-1)^2}{2 k L^2}} = e^{\frac{-\|X_{n-k}\|^2}{2 k L^2}} e^{\frac{\|X_{n-k}\|-1/2}{n L^2}} \leq e^2 \cdot e^{\frac{-\|X_{n-k}\|^2}{2 k L^2}}\,,
\end{eqnarray*}
where the last inequality uses our assumption that $\|X_{n-k}\| \leq k L + 1$ and the fact that $L \geq 1$.
Thus the bound \eqref{eq:hypoth} is satisfied since for $k \leq k_0$, we have $\beta_{k} = e^2$ and $s_k = k L^2$ (recalling \eqref{eq:seqn}).

\medskip

We are thus left to prove \eqref{eq:hypoth} by induction for $k > k_0$.
We may assume that
\begin{equation}\label{eq:assume}
\|X_{n-k}\| \leq k^{5/8} + \lambda\,,
\end{equation}
since otherwise $\Psi_k = 0$.

\medskip

Now we will use the inductive hypothesis to calculate:
\begin{align}
\Psi_{k} &= \E_{n-k} \pr_{n-k+1}\left[ \|X_n\| \leq 1 \textrm{ and } \|X_{n-t}\| \leq t^{5/8} + \lambda \textrm{ for all } 0 \leq t \leq k\right] \nonumber\\
&\leq
\E_{n-k} \pr_{n-k+1}\left[ \|X_n\| \leq 1 \textrm{ and } \|X_{n-t}\| \leq t^{5/8} + \lambda\textrm{ for all } 0 \leq t \leq k-1\right]\nonumber
\\
&= \E_{n-k} \left[\Psi_{k-1}\right] \nonumber\\
& \leq \beta_{k-1} \E_{n-k} \left[e^{-\frac{\|X_{n-k+1}\|^2}{2 s_{k-1}}}\right], \nonumber
\end{align}
where in the final line we have employed the inductive hypothesis.
Observe that here we have used the fact that $\beta_{k-1}$ is a constant; indeed,
this is where we employ our assumption that the sequence $\{v_n\}$ is deterministic.

Letting $D = X_{n-k+1}-X_{n-k}$ and using the preceding inequality, we have
\begin{align}
\Psi_k
&\leq \beta_{k-1} \E_{n-k} \left[e^{-\frac{\|X_{n-k}+D\|^2}{2 s_{k-1}}}\right] \nonumber\\
&= \beta_{k-1}  e^{-\frac{\|X_{n-k}\|^2}{2 s_k}} \cdot \E_{n-k}\left[\exp\left(\frac{-\|D\|^2}{2 s_{k-1}} - \frac{\langle X_{n-k},D\rangle}{s_{k-1}} +
\frac{\|X_{n-k}\|^2}{2} \frac{s_{k-1}-s_k}{s_{k-1} s_k}\right)\right]\,.\label{eq:final}
\end{align}
Observe that, by \eqref{eq:assume}, and assumptions (M1) and (M2),
the three terms inside the exponential (almost surely) have
their respective
magnitudes bounded by
\begin{equation}\label{eq:mags}
\frac{L^2}{2(k-1)},\quad \frac{(k^{5/8}+\lambda)L}{k-1},\quad \frac{(k^{5/8}+\lambda)^2 L^2}{2k(k-1)}\,.
\end{equation}
For $k \geq k_0 \geq 30 L^{24} \lambda^{8/5}$,
each of these terms is bounded by $\min(1/3,2L k^{-3/8})$.

We require the following basic approximation.
\begin{lemma}\label{lem:approx}
If $y \leq 1$, then
$$
e^y - (1+y+y^2/2) \leq |y|^3.
$$
\end{lemma}

Using (M2), we have $\|D\| \leq L$ almost surely.  In conjunction with \eqref{eq:assume} and the bounds \eqref{eq:mags}, we may apply Lemma \ref{lem:approx}
to write
\begin{align}
\E_{n-k}&\left[\exp\left(\frac{-\|D\|^2}{2 s_{k-1}} - \frac{\langle X_{n-k},D\rangle}{s_{k-1}} +
\frac{\|X_{n-k}\|^2}{2} \frac{s_{k-1}-s_k}{s_{k-1} s_k}\right)\right] \nonumber \\
&\leq
1 - \E_{n-k} \frac{\|D\|^2}{2 s_{k-1}} + \frac{\|X_{n-k}\|^2}{2} \frac{s_{k-1}-s_k}{s_{k-1} s_k} + \E_{n-k} \frac{\langle X_{n-k}, D\rangle^2}{2 s_{k-1}^2} + (4+72) L^3 k^{-9/8}\,,\label{eq:latter}
\end{align}
where we have also used the martingale property $\E_{n-k} D=0$.
The error term multiplied by $4$ comes from bounding the remaining quadratic terms using \eqref{eq:mags},
and the term multiplied by $72$ arises from the cubic error in Lemma \ref{lem:approx}.

Now using the fact that $\E_{n-k} \|D\|^2 = s_k-s_{k-1}$ (which follows from (M1) and the definition \eqref{eq:seqn}), along
with Cauchy-Schwarz, we can bound \eqref{eq:latter} by
\begin{align}
1 - & \frac{s_k-s_{k-1}}{2s_{k-1}} + \frac{\|X_{n-k}\|^2}{2} \frac{s_{k-1}-s_k}{s_{k-1} s_k} + \frac{\|X_{n-k}\|^2}{2} \frac{s_k-s_{k-1}}{s_{k-1}^2} + 76 L^3 k^{-9/8} \nonumber\\
&=
1 - \frac{s_k-s_{k-1}}{2s_{k-1}} + \frac{\|X_{n-k}\|^2}{2} \frac{(s_{k}-s_{k-1})^2}{s^2_{k-1} s_k} + 76 L^3 k^{-9/8} \nonumber\\
&\leq
1 - \frac{s_k-s_{k-1}}{2s_{k-1}} +  77 L^3 k^{-9/8}\,,\label{eq:explicit}
\end{align}
where in the final line we have used \eqref{eq:assume}, the fact that $s_k-s_{k-1} \leq L$ by (M2),
the fact that $s_k s_{k-1}^2 \geq k (k-1)^2$ by (M1),
and our
assumption that $k \geq k_0 \geq 30 L^{24} \lambda^{8/5}$.

Recalling \eqref{eq:final}, we have verified that almost surely
\[
\Psi_k \leq \beta_{k-1} e^{-\frac{\|X_{n-k}\|^2}{2s_k}} \left(1 - \frac{s_k-s_{k-1}}{2s_{k-1}} + 77 L^3 k^{-9/8}\right) = \beta_k e^{-\frac{\|X_{n-k}\|^2}{2s_k}}\,.
\]
This completes the proof of \eqref{eq:hypoth} by induction.
\end{proof}

We will use the preceding estimate to control the small-ball probability.
Before that, we observe that it suffices to prove a bound for $\mathbb R^2$-valued martingales.
The following dimension reduction lemma is a special case
of the continuous-time version proved in \cite{KS91}.
We include a proof here for the convenience of the reader.
A similar exposition of the discrete case appears in
\cite[Prop. 5.8.3]{KW92}.

\begin{lemma}\label{lem:KS}
Let $\{N_t\}$ be an $\mathcal H$-valued martingale.
Then there exists an $\mathbb R^2$-valued martingale $\{M_t\}$ such that
for any time $t \geq 0$,
$\|M_t\|_2= \|N_t\|_2$ and $\|M_{t+1}-M_t\|_2 = \|N_{t+1}-N_t\|_2$.
\end{lemma}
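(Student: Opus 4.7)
The plan is to build $\{M_t\}$ inductively in $\mathbb R^2$, at each step preserving both $\|M_t\|$ and $\|M_{t+1}-M_t\|$. The key geometric observation is that once $M_t$ has been placed and the values $\|M_{t+1}\|$ and $\|M_{t+1}-M_t\|$ are prescribed, $M_{t+1}$ is determined up to reflection across the line $\mathbb R\, M_t$: expanding $\|M_{t+1}\|^2 = \|M_t + (M_{t+1}-M_t)\|^2$ pins down the radial part of the increment, while the tangential part has prescribed magnitude but unknown sign. I will resolve this binary ambiguity with an independent Rademacher coin flip, which conveniently will also enforce the martingale property in the tangential direction.

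Concretely, first I would enrich the probability space by i.i.d.\ Rademacher variables $\sigma_1, \sigma_2, \ldots$ independent of $\{N_t\}$, and set $\mathcal G_t = \sigma(\mathcal F_t, \sigma_1, \ldots, \sigma_t)$. Initialize $M_0 = (\|N_0\|, 0)$. Inductively, given $M_t$ with $\|M_t\| = \|N_t\|$ and $M_t \neq 0$, let $e_t = M_t/\|M_t\|$, let $f_t$ be its $90^\circ$ rotation in $\mathbb R^2$, and decompose
\[
N_{t+1}-N_t \;=\; a_t\, \frac{N_t}{\|N_t\|} + D_t^\perp, \qquad a_t \in \mathbb R,\ D_t^\perp \perp N_t,
\]
then set $M_{t+1} = M_t + a_t\, e_t + \sigma_{t+1}\, \|D_t^\perp\|\, f_t$. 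The degenerate case $N_t = 0$ is handled by taking $a_t = 0$, $D_t^\perp = N_{t+1}-N_t$, and any fixed orthonormal frame for $(e_t, f_t)$.

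The length identities then reduce to Pythagorean computations: $\|M_{t+1}-M_t\|^2 = a_t^2 + \|D_t^\perp\|^2 = \|N_{t+1}-N_t\|^2$, and $\|M_{t+1}\|^2 = \|N_t\|^2 + 2\|M_t\|a_t + \|N_{t+1}-N_t\|^2 = \|N_t\|^2 + 2\langle N_t, N_{t+1}-N_t\rangle + \|N_{t+1}-N_t\|^2 = \|N_{t+1}\|^2$. For the martingale property with respect to $\{\mathcal G_t\}$, the radial term satisfies $\E[a_t e_t \mid \mathcal G_t] = e_t\, \E[a_t \mid \mathcal F_t] = 0$ because $a_t$ is $\mathcal F_{t+1}$-measurable and independent of the $\sigma_j$'s and $\E[N_{t+1}-N_t \mid \mathcal F_t] = 0$; the tangential term satisfies $\E[\sigma_{t+1} \|D_t^\perp\| f_t \mid \mathcal G_t] = f_t\, \E[\|D_t^\perp\| \mid \mathcal G_t]\, \E[\sigma_{t+1}] = 0$ since $\sigma_{t+1}$ is independent of $\mathcal G_t \vee \mathcal F_{t+1}$ with mean zero. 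The one conceptual step is this Rademacher randomization, reconciling the high dimensionality of $\mathcal H$ with the sign ambiguity of the tangent direction in $\mathbb R^2$; everything else is measurability bookkeeping, with the only nuisance being the $N_t = 0$ case.
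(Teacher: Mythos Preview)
Your proof is correct and is essentially the same construction as the paper's: the two points $x_1,x_2$ the paper obtains as the intersection of the circle $\{\|x\|=\|N_{t+1}\|\}$ with the line $\{\langle M_t,x\rangle=\langle N_t,N_{t+1}\rangle\}$ are exactly your $M_t+a_t e_t\pm\|D_t^\perp\|f_t$, and the paper's uniform choice between them is your Rademacher $\sigma_{t+1}$. Your version makes the radial/tangential decomposition and the enlarged filtration $\mathcal G_t$ more explicit, which is a cleaner way to verify the martingale property, but the underlying idea is identical.
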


\begin{proof}
We prove the claim by induction on $n$. The case $n=0$ is trivial.
Suppose now we can construct $\{M_t\}_{t \leq n}$ successfully based on $\{N_t\}_{t \leq n}$. We wish to specify the value of $M_{n+1}$ given $\{N_t\}_{t\leq n+1}$ and $\{M_t\}_{t\leq n}$ such that the required conditions hold.

In the generic case, there exist two distinct points in $x_1, x_2 \in \mathbb R^2$ satisfying
\begin{equation}\label{eq-norm-product}\|N_{n+1}\| = \|x_i\|, \mbox{ and } \langle N_n, N_{n+1}\rangle  = \langle M_n, x_i\rangle,\end{equation}
for $i=1,2$.
(One can see them as the intersections of a circle and a line.) Denoting these two points by $M_{n+1}^{(1)}$ and $M_{n+1}^{(2)}$,
we now let $M_{n+1}$ be $M_{n+1}^{(1)}$ (resp., $M_{n+1}^{(2)}$) with probability $\frac{1}{2}$. It is clear that $\|N_{n+1}\| = \|M_{n+1}\|$. Recalling \eqref{eq-norm-product} and using the induction hypothesis $\|N_n\| = \|M_n\|$, we also infer that
\begin{align*}\|M_{n+1} - M_n\|^2 &= \|M_{n+1}\|^2 - 2\langle M_{n+1}, M_n\rangle  + \|M_n\|^2 \\
&= \|N_{n+1}\|^2 - 2\langle N_{n+1}, N_n\rangle  + \|N_n\|^2 \\
&= \|N_{n+1} - N_n\|^2~.\end{align*}

It remains to prove that $\E[M_{n+1} \mid M_1, \ldots, M_n] = M_n$. To this end, it suffices to show
\begin{align}
\E[\langle M_{n+1}, M_n^{\bot} \rangle \mid M_1, \ldots, M_n] &= 0, \label{eq-condition-1}\\
\E[\langle M_{n+1}, M_n\rangle  \mid M_1, \ldots, M_n] &= \|M_n\|^2, \label{eq-condition-2}
\end{align}
where $M_n^{\bot}$ is a unit vector with $\langle M_n^{\bot}, M_n\rangle =0$. Equality \eqref{eq-condition-1} follows by our uniform random choice of $M_n$ over $\{M_{n+1}^{(1)}, M_{n+1}^{(2)}\}$. Since $\{N_t\}$ is a martingale, we have that $\E\{\langle N_{n+1}, N_n \rangle  \mid N_1,\ldots, N_n\} = \|N_n\|^2$. Combined with \eqref{eq-norm-product} and our choice of $M_{n+1}$, we obtain \eqref{eq-condition-2} as required.

In the degenerate case when $N_n$ and $N_{n+1}$ are proportional, there is a unique solution to (\ref{eq-norm-product}), and we just let $M_{n+1}$ be that unique point.
In the case when $N_n=0$ but $N_{n+1} \neq 0$, there are infinitely many solutions,
one can pick out two symmetric ones and let $M_{n+1}$ be uniformly random over those two points.
\end{proof}

We now proceed to our first small-ball estimate.

\begin{theorem}\label{thm:smallball}
Assume that $\{X_n\}$ is an $\mathcal H$-valued martingale satisfying (M1) and (M2).  If $X_0=x_0$, then for any $n \geq 1$,
\[
\P(\|X_n\| \leq 1) \leq \frac{O(L^{20})}{\sqrt{n}} e^{-\frac{\|x_0\|^2}{3L^2 n}}\,.
\]
\end{theorem}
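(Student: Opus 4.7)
The plan is to split $\{\|X_n\|\leq 1\}$ into a controlled piece (handled by Lemma~\ref{lem:main}) and an uncontrolled piece, where the latter is bounded by a union-bound argument over a discretization of positions in $\mathbb{R}^2$ made possible by the dimension reduction of Lemma~\ref{lem:KS}, combined with induction on $n$. A case analysis on $\|x_0\|$ then delivers the Gaussian factor with the correct constant $1/3$.

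By Lemma~\ref{lem:KS} I may couple $\{X_n\}$ with an $\mathbb{R}^2$-valued martingale preserving all norms and increment norms, so $\P(\|X_n\|\leq 1)$ is unchanged; I henceforth assume $\mathcal{H}=\mathbb{R}^2$. Fix $\lambda \asymp L^{35/4}$ so that the Lemma~\ref{lem:main} prefactor $cL^{13}\lambda^{4/5}$ becomes $O(L^{20})$, and partition
\[
\{\|X_n\|\leq 1\} \;=\; A \;\cup\; \bigcup_{t=1}^{n} B_t,
\]
where $A = \{\|X_n\|\leq 1\}\cap\bigcap_{0\leq t\leq n}\{\|X_{n-t}\|\leq t^{5/8}+\lambda\}$ is the controlled event and $B_t = \{\|X_{n-t}\|>t^{5/8}+\lambda,\ \|X_n\|\leq 1\}$. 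Lemma~\ref{lem:main} immediately gives $\P(A\mid X_0=x_0)\leq \tfrac{O(L^{20})}{\sqrt n}\,e^{-\|x_0\|^2/(2L^2 n)}\leq \tfrac{O(L^{20})}{\sqrt n}\,e^{-\|x_0\|^2/(3L^2 n)}$, since the Lemma's exponent is more negative than the theorem's.

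For each $B_t$, I take a union bound over a unit grid of points $y\in\mathbb{R}^2$ with $\|y\|>t^{5/8}+\lambda-1$, writing $\P(B_t)\leq\sum_y\P(X_{n-t}\in B(y,1),\,\|X_n\|\leq 1)$. Each summand factors: applying the inductive hypothesis on $n$ (valid for $n-t<n$) to the translated martingale $\{X_s-y\}$ gives $\P(\|X_{n-t}-y\|\leq 1)\leq \tfrac{O(L^{20})}{\sqrt{n-t}}\,e^{-\|x_0-y\|^2/(3L^2(n-t))}$, while Azuma's inequality applied to the scalar projection $\langle X_{n-t+k}-X_{n-t},\,y/\|y\|\rangle$ bounds the conditional probability $\P(\|X_n\|\leq 1\mid X_{n-t}\in B(y,1))$ by $2\exp(-(\|y\|-2)^2/(2tL^2))$. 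Approximating the sum by the corresponding Gaussian integral over $y$ and completing the square produces a factor $e^{-\|x_0\|^2/(L^2(3n-t))}$ together with a polynomial prefactor in $t$ and $n-t$; summing over $t$ yields the total bound $\tfrac{O(L^{20})}{\sqrt n}\,e^{-\|x_0\|^2/(3L^2 n)}$.

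The Gaussian factor with exact constant $1/3$ is finalized by case analysis on $\|x_0\|$. For $\|x_0\|\gtrsim L\sqrt{n\log n}$, Azuma applied directly to $\{X_s-x_0\}$ gives $\P(\|X_n\|\leq 1)\leq 2e^{-(\|x_0\|-1)^2/(2nL^2)}$, and the inequality $(\|x_0\|-1)^2/(2nL^2)\geq \|x_0\|^2/(3nL^2)+\tfrac12\log n+O(\log L)$ valid in this regime absorbs the missing $1/\sqrt n$ factor. For smaller $\|x_0\|$, the controlled bound and the inductive uncontrolled bound already suffice. The main technical obstacle is closing the inductive step for the uncontrolled piece: the unit-grid discretization in $\mathbb{R}^2$ must be fine enough that the sum-by-integral approximation is valid, while the combination of the inductive Gaussian in $\|x_0-y\|$ and the Azuma Gaussian in $\|y\|$, after integration over $y$ and summation over $t$, must produce exactly $e^{-\|x_0\|^2/(3L^2 n)}$ without losing the $O(L^{20})$ prefactor; the choice $\lambda = L^{35/4}$ is simultaneously pinned by the Lemma's prefactor requirement and by the need for sufficiently strong Azuma decay in the uncontrolled contribution.
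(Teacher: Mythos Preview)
Your overall architecture matches the paper's proof exactly: reduce to $\mathbb{R}^2$ via Lemma~\ref{lem:KS}, split into a controlled trajectory handled by Lemma~\ref{lem:main}, bound the uncontrolled piece by covering the position at time $n-t$ with a unit net and combining the inductive hypothesis with an Azuma bound on the last $t$ steps, then handle large $\|x_0\|$ by Azuma alone. The ingredients are all there.

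There is, however, a genuine gap in how you close the induction on the uncontrolled piece. You write that completing the square in $y$ produces $e^{-\|x_0\|^2/(L^2(3n-t))}$ together with ``a polynomial prefactor in $t$ and $n-t$; summing over $t$ yields the total bound.'' But if you integrate the product of the two Gaussians over \emph{all} $y\in\mathbb{R}^2$, the normalization gives a factor of order $\min\{t,n-t\}\,L^2$, and $\sum_{t=1}^{n}\frac{tL^2}{\sqrt{n-t}}$ is of order $L^2 n^{3/2}$, not $1/\sqrt{n}$. Since the uncontrolled piece carries the inductive constant $B$, the sum over $t$ must be at most $\tfrac12$ after factoring out $B/\sqrt{n}$; a polynomial prefactor cannot achieve this. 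What is missing is the exponential decay in $t$ that comes from the restriction $\|y\|>t^{5/8}+\lambda-1$: the Azuma Gaussian in $\|y\|$ has variance $\asymp tL^2$, so its tail beyond $t^{5/8}$ contributes a factor $e^{-t^{1/4}/\Theta(L^2)}$, which is what kills the sum. Your sketch never extracts this factor; ``completing the square'' over the restricted region does not produce it automatically.

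The paper avoids this pitfall by \emph{not} completing the square. It bounds $\sum_{y\in N_k}e^{-\|x_0-y\|^2/(3L^2(n-k))}$ crudely by $|N_k|\cdot e^{-\max(0,\|x_0\|-(kL+1))^2/(3L^2(n-k))}$, and keeps the Azuma conditional bound $e^{-k^{1/4}/(2L^2)}$ as a separate multiplicative factor. The whole burden then falls on the pointwise inequality
\[
\exp\!\left(-\frac{k^{1/4}}{4L^2}-\frac{\max(0,\|x_0\|-(kL+1))^2}{3L^2(n-k)}\right)\;\leq\;\alpha\,\exp\!\left(-\frac{\|x_0\|^2}{3L^2 n}\right),
\]
which the paper verifies by a short case analysis on $\|x_0\|\lessgtr kL+1$ (this is also where the a priori reduction to $\|x_0\|\leq 3L\sqrt{n\log n}$ is used). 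Only then is $k_0\asymp L^9$ chosen so that $\sum_{k\ge k_0}(kL+1)^2 e^{-k^{1/4}/(4L^2)}/\sqrt{n-k}\le \tfrac{1}{2\sqrt n}$, and finally $B=2cL^{64/5}k_0^{4/5}=O(L^{20})$; note that $\lambda=k_0L$ is determined by this summability requirement, not just by the Lemma~\ref{lem:main} prefactor as you suggest. Your completing-the-square route can be repaired (split off $e^{-(\|y\|-2)^2/(4tL^2)}\le e^{-t^{1/4}/\Theta(L^2)}$ on the region before integrating), but as written the sum over $t$ does not close.
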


\begin{proof}
By Lemma \ref{lem:KS}, we may assume that $\{X_n\}$ takes values in $\mathbb R^2$.
By induction on $n$, we will prove that
\begin{equation}\label{eq:toprove}
\P(\|X_n\| \leq 1) \leq \frac{B}{\sqrt{n}} e^{-\frac{\|x_0\|^2}{3 L^2 n}}\,\,
\end{equation}
for some number $B \leq O(L^{20})$ to be chosen later.
The case $n=1$ is trivial as long as $B \geq e$, since the left-hand side is 0 for $\|x_0\| > L+1$ (by (M2)).

Also observe that Azuma's inequality applied
to the $1$-dimensional martingale $\{\langle x_0, x_0 - X_t\rangle\}$
 implies that
\[
\P(\|X_n\| \leq 1) \leq e^{-\frac{\max(0,\|x_0\|-1)^2}{2 L^2 n}}\,.
\]
If $\|x_0\| > 3L\sqrt{n \log n}$, then
\[
e^{-\frac{\max(0,\|x_0\|-1)^2}{2 L^2 n}} \leq \frac{B}{\sqrt{n}} e^{\frac{-\|x_0\|^2}{3 L^2 n}}\,,
\]
as long as $B > 0$ is a sufficiently large constant.
Thus we may assume that
\begin{equation}\label{eq:Xnorm}
\|x_0\| \leq 3L\sqrt{n \log n}\,.
\end{equation}

Let $k_0 \geq 1$ be a number to be chosen later and put $\lambda = k_0 L$.
We may decompose
\begin{align}
\P\left(\|X_n\| \leq 1\right) & \leq \P\left(\|X_n\| \leq 1 \textrm{ and } \|X_{n-t}\| \leq t^{5/8} + \lambda \textrm{ for } 0 \leq t \leq n\right) \nonumber\\
&\ \ \ + \sum_{k=k_0}^n \P\left(\|X_n\| \leq 1 \textrm{ and } \|X_{n-k}\| > k^{5/8}+1\right) \nonumber\\
& \leq \frac{c L^{64/5} k_0^{4/5}}{\sqrt{n}} e^{-\frac{\|x_0\|^2}{2L^2 n}} + \sum_{k=k_0}^n \P\left(\|X_n\| \leq 1 \textrm{ and } \|X_{n-k}\| > k^{5/8}+1\right) \,, \label{eq:part1}
\end{align}
where we have bounded the
 first term using Lemma \ref{lem:main}.

Note that if $\|X_n\| \leq 1$ then by (M2), we must have $\|X_{n-k}\| \leq kL+1$.
Let $N_k$ denote a $1$-net in the Euclidean disk of radius $kL+1$ about $0$, and observe that
$|N_k| \leq 4 (kL+1)^2$.  Thus we have
\begin{align}
\P&\left(\|X_n\| \leq 1 \textrm{ and } \|X_{n-k}\| > k^{5/8}+1\right)\nonumber\\
&=
\P\left(\|X_n\| \leq 1 \textrm{ and } kL+1 \geq \|X_{n-k}\| > k^{5/8}+1\right) \nonumber \\
&\leq
\P\left(\|X_n\| \leq 1 \mid kL+1 \geq \|X_{n-k}\| > k^{5/8}+1\right) \P\left(\|X_{n-k}\| \leq kL+1\right) \nonumber \\
& \leq \P\left(\|X_n\| \leq 1 \mid kL+1 \geq \|X_{n-k}\| > k^{5/8}+1\right) \sum_{y \in N_k} \P(\|X_{n-k}-y\| \leq 1) \nonumber \\
&\leq   \P\left(\|X_n\| \leq 1 \mid kL+1 \geq \|X_{n-k}\| > k^{5/8}+1\right) \sum_{y \in N_k} \frac{B}{\sqrt{n-k}} e^{-\frac{\|x_0-y\|^2}{3L^2(n-k)}} \nonumber \\
&\leq \P\left(\|X_n\| \leq 1 \mid kL+1 \geq \|X_{n-k}\| > k^{5/8}+1\right) \frac{B |N_k|}{\sqrt{n-k}} e^{-\frac{\max(0,\|x_0\|-(kL+1))^2}{3L^2(n-k)}},\label{eq:part2}
\end{align}
where in the third line we have used the inductive hypothesis,
and in the final line a union bound.

We may then apply Azuma's inequality to the $1$-dimensional martingale
\[\left\{\left\langle  X_{t}-X_{n-k},\tfrac{X_{n-k}}{\|X_{n-k}\|}\right\rangle : t =n-k, n-k+1, \ldots, n\right\}\]
to conclude that
\begin{align}
\P&\left(\|X_n\| \leq 1 \,\big|\, kL+1 \geq \|X_{n-k}\| > k^{5/8}+1\right) \nonumber \\
& \leq \P\left(\vphantom{\bigoplus}\langle X_n-X_{n-k}, \tfrac{X_{n-k}}{\|X_{n-k}\|} \rangle \geq k^{5/8} \,\Big|\, kL+1 \geq \|X_{n-k}\| > k^{5/8}+1\right) \nonumber \\
& \leq 
\P\left(\vphantom{\bigoplus}\langle X_{n}-X_{n-k}, 
 \tfrac{X_{n-k}}{\|X_{n-k}\| }
 \rangle \geq k^{5/8} \mid X_{n-k} \right)
\nonumber \\
 & \leq e^{-k^{1/4}/(2L^2)}\,.\label{eq:part3}
\end{align}

Combining \eqref{eq:part1}, \eqref{eq:part2}, \eqref{eq:part3}, and using $|N_k| \leq 4(kL+1)^2$ yields
\begin{align*}
\P(\|X_n\| \leq 1) & \leq  \frac{cL^{64/5} k_0^{4/5}}{\sqrt{n}} e^{-\frac{\|x_0\|^2}{2L^2 n}} \\
&\,\,\,\,\,\,+ B \sum_{k=k_0}^n \frac{4(kL+1)^2}{\sqrt{n-k}}
\exp\left(\frac{-k^{1/4}}{2L^2} - \frac{\max(0,\|x_0\|-(kL+1))^2}{3L^2(n-k)} \right)\,.
\end{align*}

Our goal is now to prove that there is a universal constant $\alpha > 0$ (in particular,
$\alpha$ will not depend on $B$) such that
\begin{equation}\label{eq:left-to-prove}
\exp\left(\frac{-k^{1/4}}{4L^2} - \frac{\max(0,\|x_0\|-(kL+1))^2}{3L^2(n-k)} \right) \leq \alpha \exp\left(\frac{-\|x_0\|^2}{3L^2 n}\right)\,.
\end{equation}
Plugging this estimate into the preceding inequality yields
\begin{align*}
\P(\|X_n\| \leq 1) & \leq  \frac{cL^{64/5} k_0^{4/5}}{\sqrt{n}} e^{-\frac{\|x_0\|^2}{2L^2 n}}
+ B \exp\left(\frac{-\|x_0\|^2}{3L^2 n}\right) \left[\alpha \sum_{k=k_0}^n \frac{4(kL+1)^2}{\sqrt{n-k}}
\exp\left(\frac{-k^{1/4}}{4L^2}\right)\right].
\end{align*}

By choosing $k_0 \asymp L^{9}$ large enough (depending on $\alpha$), the sum in brackets is at most $\frac{1}{2 \sqrt{n}}$.
Indeed, one can choose $k_0$ such that the value is at most
\[
\alpha \sum_{k=k_0}^n \frac{e^{-k^{1/36}/8}}{\sqrt{n-k}}\,.
\]
This sum is dominated by its first term which can be made
arbitrarily small by an appropriate choice of $k_0$.

Fixing this value of $k_0$ and setting $B = 2 c L^{64/5} k_0^{4/5} \leq O(L^{20})$
shows that \[\P(\|X_n\| \leq 1) \leq \frac{B}{\sqrt{n}} e^{-\frac{\|x_0\|^2}{3L^2 n}}\,,\]
completing the proof of \eqref{eq:toprove} by induction.  Thus we are left to prove \eqref{eq:left-to-prove} for $k \geq k_0$.

\medskip

\noindent
{\bf Case I:} $\|x_0\| \leq kL+1$.

\medskip

In this case, we need to show that $\exp(\frac{-k^{1/4}}{4L^2}) \leq \alpha \exp(\frac{-\|x_0\|^2}{3L^2 n})$.
We may assume that $\|x_0\| > L\sqrt{n}+1$, else the inequality holds trivially for some $\alpha = O(1)$.
In particular, we may assume that $k > \sqrt{n}$.  But then our assumption \eqref{eq:Xnorm} that $\|x_0\| \leq 3 L \sqrt{n \log n}$
shows that the inequality holds for some $\alpha = O(1)$ (with room to spare).

\medskip
\noindent
{\bf Case II:} $\|x_0\| > kL+1$.

\medskip

In this case, it suffices to argue that
\[
\exp\left(\frac{-k^{1/4}}{4L^2} - \frac{(\|x_0\|-(kL+1))^2}{3L^2(n-k)} + \frac{\|x_0\|^2}{3L^2 n} \right) \leq O(1)\,.
\]
Expanding the square, we see that it is enough to show
\begin{equation}\label{eq:finally}
\exp\left(\frac{-k^{1/4}}{4 L^2} + \frac{2 (kL+1)\|x_0\|}{3L^2(n-k)}\right) \leq O(1)\,.
\end{equation}
Recalling \eqref{eq:Xnorm} that $\|x_0\| \leq 3L\sqrt{n \log n}$,
we have $k \leq 3\sqrt{n \log n}$.  Thus the positive term in \eqref{eq:finally} is $O(1)$
unless $k \geq \sqrt{n/\log n}$.  But if $\sqrt{n/\log n} \leq k \leq 3\sqrt{n \log n}$, then we have
\[
\frac{k^{1/4}}{4L^2} \geq \frac{2(kL+1) 3L\sqrt{n \log n}}{3L^2 n} \geq \frac{2(kL+1) \|x_0\|}{3L^2 (n-k)}\,
\]
where we have additionally used the fact that $k \geq k_0$ and $k_0 \asymp L^9$ is chosen large enough.
We have thus verified \eqref{eq:finally}, completing the proof.
\end{proof}

Finally, we extend Theorem \ref{thm:smallball} to larger radii.

\begin{theorem}\label{thm:largeballs}
Assume that $\{X_n\}$ is an $\mathcal H$-valued martingale satisfying (M1) and (M2),
with $X_0=x_0$.
Then for any $n \geq 1$ and  $1 \leq R \leq \sqrt{n}$, we have
\[
\P(\|X_n\| \leq R) \leq O(L^{20}) \frac{R}{\sqrt{n}} e^{-\frac{\|x_0\|^2}{6 L^2 n}}\,.
\]
\end{theorem}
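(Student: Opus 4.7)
The plan is to reduce to the case of an $\mathbb R^2$-valued martingale using Lemma \ref{lem:KS}, and then bound $\P(\|X_n\| \leq R)$ by combining Theorem \ref{thm:smallball} (the radius-$1$ case) with a covering and conditioning argument. Two easy regimes should be dispatched first. When $R \geq C\sqrt n/L^{20}$ for a suitable constant $C$, the right-hand side of the claimed inequality already exceeds $1$, so the bound is vacuous. When $\|x_0\| \gtrsim L\sqrt{n \log n}$, Azuma's inequality applied to the one-dimensional martingale $\{\langle X_t - x_0,\, x_0/\|x_0\|\rangle\}$ gives a sub-Gaussian tail that implies the claim directly, mirroring the analogous step in the proof of Theorem \ref{thm:smallball} (cf.~\eqref{eq:Xnorm}).

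In the main regime, the natural first attempt is to cover $B(0,R)\subset\mathbb R^2$ by a $1$-net $N_R$ of $O(R^2)$ points and write
\[
\P(\|X_n\|\le R)\le \sum_{y\in N_R}\P(\|X_n-y\|\le 1)\le \frac{O(L^{20})}{\sqrt n}\sum_{y\in N_R}e^{-\|x_0-y\|^2/(3L^2n)},
\]
using Theorem \ref{thm:smallball} applied to each translated martingale $\{X_t - y\}$ (which is again a martingale satisfying (M1) and (M2) with the same parameters, since shifting by a constant affects neither the Lipschitz bound nor the conditional variances). The naive estimate of this sum is $O(L^{20} R^2/\sqrt n)$, off from the desired $O(L^{20} R/\sqrt n)$ by precisely a factor of $R$.

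The main obstacle is closing this gap. To do so, I would borrow the intermediate-time strategy used in the proof of Theorem \ref{thm:smallball} (see \eqref{eq:part2}--\eqref{eq:part3}): condition on $X_m$ for $m = n - \Theta(R^2)$, apply Theorem \ref{thm:smallball} at time $m$ to obtain a density bound of order $L^{20}/\sqrt m$ per unit ball, and use Azuma's inequality on the last $n-m$ steps to limit the effective support of $X_m$ to a thin annulus instead of the full ball of radius $LR$. The Azuma factor $e^{-(\|X_m\|-R)^2/(2L^2(n-m))}$ with $n - m \asymp R^2$ decays on scale $LR$ in $\|X_m\|$, so after summing the thin-annulus contribution it is only of order $R$ net points that matter. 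The mild weakening of the Gaussian exponent in the target bound (from $1/(3L^2n)$ in Theorem \ref{thm:smallball} to $1/(6L^2n)$ here) provides the slack to absorb the translation losses $e^{-\|x_0-y\|^2/(3L^2 n)}\ge e^{-\|x_0\|^2/(6L^2n)}e^{-\|y\|^2/(3L^2n)}$ when distributing the Gaussian factor over the net. Verifying that these pieces combine to yield the clean $O(L^{20}) R/\sqrt n \cdot e^{-\|x_0\|^2/(6L^2n)}$ bound, rather than only a weaker estimate with an extra factor of $R$ or a blown-up $L$-power, is the delicate part of the proof.
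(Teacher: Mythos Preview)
Your proposal has a genuine gap. The ``thin annulus'' argument you sketch does not recover the missing factor of $R$. Concretely, take $x_0=0$ and $m=n-\Theta(R^2)$. Conditioning on $X_m$ and combining Theorem~\ref{thm:smallball} at time $m$ with Azuma on the last $n-m$ steps gives, over a $1$-net $\{y\}$ in $\mathbb R^2$,
\[
\P(\|X_n\|\le R)\ \lesssim\ \frac{L^{20}}{\sqrt{m}}\sum_{y} e^{-(\|y\|-R)_+^2/(2L^2(n-m))}\,.
\]
With $n-m\asymp R^2$ the Azuma weight decays on scale $LR$ in $\|y\|$, so the annulus $R\le\|y\|\lesssim LR$ contributes $\Theta(L^2R^2)$ net points; but more fundamentally, for every $y$ with $\|y\|\le R$ the weight is simply $1$, and the disk $\{\|y\|\le R\}$ already contains $\Theta(R^2)$ net points. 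The sum is therefore $\Theta(L^2R^2)$, and you end with $O(L^{22})R^2/\sqrt n$, not $O(L^{20})R/\sqrt n$. No choice of the intermediate time $m$ helps: the two-dimensional target region has area $\Theta(R^2)$, and a covering-plus-Azuma scheme cannot beat that.

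The paper's proof uses a different (and much shorter) idea. Rather than analyzing $\{X_t\}_{t\le n}$ more carefully, it \emph{extends} the martingale forward by $m=\lfloor R^2\rfloor$ steps of a one-dimensional simple random walk in the direction $X_n/\|X_n\|$, obtaining a martingale $\{Y_t\}_{t\le n+m}$ that still satisfies (M1)--(M2). By the local CLT for SRW, on the event $\{\|X_n\|\le R\}$ the extra steps land within distance $1$ of the origin with probability $\gtrsim 1/R$, so
\[
\P(\|Y_{n+m}\|\le 1)\ \ge\ \frac{c}{R}\,\P(\|X_n\|\le R)\,.
\]
Applying Theorem~\ref{thm:smallball} at time $n+m$ (with $R\le\sqrt n$ so $n+m\le 2n$) then gives the claim directly, and the passage from exponent $1/(3L^2n)$ to $1/(6L^2n)$ absorbs the replacement of $n$ by $n+m$. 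The key point your proposal misses is that the reduction to radius $1$ should happen along a \emph{single} direction, not via a two-dimensional covering.
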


\begin{proof}
Consider a martingale $\{Y_t\}$ defined as follows.  For $1 \leq t \leq n$, we set $Y_t=X_t$.
Let $m = \lfloor R^2\rfloor$.
For $n < t \leq n+m$, put
$$
Y_t = X_n + \frac{X_n}{\|X_n\|} \sum_{j=1}^{t-n} \varepsilon_j\,,
$$
were $\{\varepsilon_j\}$ are i.i.d. signs independent of $\{X_n\}$.  Then the martingale $\{Y_t\}_{t=0}^{n+m}$ satisfies
assumptions (M1) and (M2)
hence by Theorem \ref{thm:smallball},
\begin{equation}\label{eq:radone}
\P(\|Y_{n+m}\| \leq 1) \leq \frac{O(L^{20})}{\sqrt{n+m}} e^{-\frac{\|x_0\|^2}{3L^2(n+m)}}\,.
\end{equation}
On the other hand, since simple random walk satisfies a local CLT, there is a constant $c > 0$ such that
\[
\P(\|Y_{n+m}\| \leq 1) \geq \frac{c}{R} \P(\|X_n\| \leq R)\,.
\]
Combining this with \eqref{eq:radone} yields the desired result.
\end{proof}

\section{Random walks on vertex-transitive graphs}
\label{sec:groups}

A primary application of our small-ball estimate is to random walks on vertex-transitive graphs.
We will use $\dist_G$ to denote the shortest-path metric on a graph $G$.

\begin{theorem}[Diffusive random walks]
\label{thm:diffusive}
For every infinite, locally-finite, connected, vertex-transitive graph $G$,
there is a constant $C_G > 0$ such that if $\{Z_t\}$ is the random walk on $G$, then
for every $\e > 0$ and every $t \geq 1/\varepsilon^2$,
$$
\P\left(\vphantom{\bigoplus}\dist_G(Z_t,Z_0) \leq \e \sqrt{t}\right) \leq C_G\, \varepsilon.
$$
\end{theorem}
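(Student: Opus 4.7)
The plan is to push the random walk $\{Z_t\}$ into a Hilbert space and invoke Theorem~\ref{thm:largeballs}. The main input from \cite{LP13} is the existence, for any infinite, locally-finite, connected, vertex-transitive graph $G$, of an equivariant harmonic $1$-Lipschitz map $\Phi : V(G) \to \mc H$ into some (separable) Hilbert space $\mc H$. Here \emph{equivariant} means that there is an affine isometric action of a transitive subgroup $\Gamma \leq \Aut(G)$ on $\mc H$ intertwining $\Phi$; \emph{harmonic} means $\Phi(v) = \frac{1}{\deg(v)} \sum_{u \sim v} \Phi(u)$ for every $v$; and the key nontrivial property is that the speed
\[
v_1 \;:=\; \E\|\Phi(Z_1) - \Phi(Z_0)\|^2
\]
(which is independent of $Z_0$ by equivariance) is strictly positive. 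For infinite, connected, vertex-transitive graphs of bounded degree this is established in \cite{LP13} via a Mok--Korevaar--Schoen type construction.

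Set $M_t := \Phi(Z_t)/\sqrt{v_1}$. Harmonicity of $\Phi$ immediately yields the martingale property $\E[M_{t+1} \mid \mc F_t] = M_t$, where $\mc F_t$ is the $\sigma$-algebra generated by $Z_0,\dots,Z_t$. Equivariance together with vertex-transitivity show that, conditioned on $Z_t = v$, the random vector $\Phi(Z_{t+1}) - \Phi(Z_t)$ is an isometric image of its $Z_0 = o$ version, so $\E[\|M_{t+1}-M_t\|^2 \mid \mc F_t] = 1$ is deterministic; and $\|M_{t+1} - M_t\| \leq L := 1/\sqrt{v_1}$ almost surely by $1$-Lipschitzness (note also $v_1 \leq 1$, hence $L \geq 1$). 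Therefore $\{M_t\}$ satisfies hypotheses (M1) and (M2) with $v_n \equiv 1$.

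The $1$-Lipschitz property of $\Phi$ yields the containment $\{\dist_G(Z_t, Z_0) \leq \e\sqrt{t}\} \subseteq \{\|M_t - M_0\| \leq R\}$ with $R := \e\sqrt{t}/\sqrt{v_1}$. If $\e \geq \sqrt{v_1}$ the target bound is trivial (take $C_G = 1/\sqrt{v_1}$); otherwise $t \geq 1/\e^2$ forces $1 \leq R \leq \sqrt{t}$, and Theorem~\ref{thm:largeballs} applied with $x_0 = 0$ gives
\[
\P\!\left(\dist_G(Z_t,Z_0) \leq \e\sqrt{t}\right) \;\leq\; \P(\|M_t - M_0\| \leq R) \;\leq\; O(L^{20})\,\frac{R}{\sqrt{t}} \;=\; \frac{O(L^{20})}{\sqrt{v_1}}\,\e,
\]
so one may take $C_G = O(v_1^{-21/2})$. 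The principal difficulty is the existence of the harmonic embedding with $v_1 > 0$, which we import as a black box from \cite{LP13}; once it is in hand, the remaining steps are a short bookkeeping argument combining the Lipschitz property of $\Phi$, the equivariance-based verification of (M1)--(M2), and Theorem~\ref{thm:largeballs}.
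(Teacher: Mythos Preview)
Your argument is essentially the paper's own proof in the amenable case, but there is a genuine gap in how you invoke \cite{LP13}. The equivariant non-constant harmonic map into a Hilbert space is constructed in \cite{LP13} only under the assumption that $G$ is \emph{amenable}; it is not available for arbitrary infinite vertex-transitive graphs. Indeed, if a transitive subgroup $\Gamma\le\Aut(G)$ has Kazhdan's property~(T), then every affine isometric $\Gamma$-action on a Hilbert space has a global fixed point, and a short strict-convexity argument shows that any equivariant harmonic map $\Phi:V\to\mathcal H$ must then be constant, forcing $v_1=0$. So your black box fails precisely for a class of non-amenable graphs.

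The paper deals with this by splitting into cases before invoking the harmonic map. If $G$ is non-amenable, the random walk has spectral radius $\rho<1$, and a crude counting bound gives
\[
\P\bigl(\dist_G(Z_0,Z_t)\le \e\sqrt{t}\bigr)\;\le\; d^{\,\e\sqrt{t}}\,\rho^{\,t}\,,
\]
which is at most $C_{\rho,d}\,\e$ once $t\ge 1/\e^2$. Only in the amenable case does one appeal to \cite{LP13} for the equivariant harmonic embedding and then run exactly the computation you wrote (with the normalization $\E\|\Psi(Z_1)-\Psi(Z_0)\|^2=1$ and $L=\sqrt{d}$, rather than your equivalent choice $L=1/\sqrt{v_1}$). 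Adding this case split repairs your argument; everything else in your proposal is correct and matches the paper.
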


This should be compared to the result of the first two authors \cite{LP13}
which shows that this property holds for an average $t$, i.e.
\[
\frac{1}{t} \sum_{s=0}^t \P\left(\dist_G(Z_0,Z_s) \leq \varepsilon \sqrt{t}\right) \leq C_G\,\varepsilon\,.
\]

If $G$ is non-amenable, then the random walk has spectral radius $\rho < 1$,
so \[\P(\dist_G(Z_0,Z_t) \leq \e \sqrt{t}) \leq d^{\e \sqrt{t}} \rho^t\,.\]
(See, for example, \cite{Woess00}.)  The latter quantity is at most $C_{\rho,d}\, \e$ for $t \geq 1/\e^2$.
Thus Theorem \ref{thm:diffusive} follows from an analysis of the amenable case.

\begin{theorem}
\label{thm:nonamenable}
If $G$ is a $d$-regular, infinite, connected, vertex-transitive graph that is also amenable and $\{Z_t\}$ is
the random walk on $G$, then the following holds.
For any $\e > 0$ and $t \geq 1/\e^2$,
\[
\P\left(\dist_G(Z_0,Z_t) \leq \e \sqrt{t/d}\right) \leq K d^{10} \varepsilon\,,
\]
where the constant $K > 0$ is universal.
\end{theorem}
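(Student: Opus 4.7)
The plan is to push the graph random walk into a Hilbert space via an equivariant harmonic embedding of $G$ and then invoke Theorem~\ref{thm:largeballs} with $L = \sqrt{d}$. Following the approach of \cite{LP13}, one constructs a non-constant equivariant harmonic map $\Phi : V(G) \to \mathcal H$ with respect to a transitive subgroup $\Gamma \leq \Aut(G)$ and an orthogonal action of $\Gamma$ on $\mathcal H$. For amenable vertex-transitive graphs such a $\Phi$ exists and may be produced, for instance, as an ultralimit of minimizers of the Dirichlet energy on an exhausting sequence of balls. I would normalize $\Phi$ so that $\sum_{w \sim v_0}\|\Phi(w) - \Phi(v_0)\|^2 = 1$ for some (and hence, by equivariance, every) vertex $v_0$.

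Two features drop out of this normalization. First, since each summand is dominated by the sum, $\|\Phi(w) - \Phi(v)\| \leq 1$ for every edge $\{v,w\}$, and the triangle inequality along a shortest path promotes this to $\|\Phi(u) - \Phi(v)\| \leq \dist_G(u,v)$ for all $u,v$. Second, by equivariance the conditional variance of a single random-walk step is deterministic:
\[
\E\!\left[\|\Phi(Z_{t+1}) - \Phi(Z_t)\|^2 \,\big|\, Z_t\right] \;=\; \tfrac{1}{d}\sum_{w \sim Z_t}\|\Phi(w) - \Phi(Z_t)\|^2 \;=\; \tfrac{1}{d}\,.
\]
Setting $M_t := \sqrt{d}\,(\Phi(Z_t) - \Phi(Z_0))$, harmonicity of $\Phi$ makes $\{M_t\}$ an $\mathcal H$-valued martingale with $M_0 = 0$ satisfying (M1) with $v_n \equiv 1$ and (M2) with $L = \sqrt{d}$.

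We may assume $\e \leq 1$, else $Kd^{10}\e \geq 1$ makes the bound trivial. If $\dist_G(Z_0,Z_t) \leq \e\sqrt{t/d}$, then by the $1$-Lipschitz property of $\Phi$,
\[
\|M_t\| \;=\; \sqrt d\,\|\Phi(Z_t) - \Phi(Z_0)\| \;\leq\; \sqrt d \cdot \e\sqrt{t/d} \;=\; \e\sqrt t\,.
\]
Since $t \geq 1/\e^2$ forces $1 \leq R := \e\sqrt t \leq \sqrt t$, Theorem~\ref{thm:largeballs} applied to $\{M_t\}$ with $L = \sqrt d$ and $x_0 = 0$ yields
\[
\P\!\left(\dist_G(Z_0, Z_t) \leq \e\sqrt{t/d}\right) \;\leq\; \P(\|M_t\| \leq \e\sqrt t) \;\leq\; O(d^{10})\cdot\frac{\e\sqrt t}{\sqrt t} \;=\; O(d^{10})\,\e\,,
\]
as required.

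The substantive input is the construction of $\Phi$: for amenable vertex-transitive graphs one must exhibit a non-constant equivariant harmonic map into Hilbert space, which is exactly the Lee--Peres machinery. Once this is in hand, the rest is a clean rescaling that identifies the degree $d$ with $L^2$ in Theorem~\ref{thm:largeballs}, and this is why the final constant scales as $L^{20} = d^{10}$. Any improvement in the exponent of $L$ in Theorem~\ref{thm:largeballs} would immediately translate into a correspondingly improved dependence on $d$ here.
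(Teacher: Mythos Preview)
Your proposal is correct and follows essentially the same route as the paper: invoke the Lee--Peres equivariant harmonic map into Hilbert space, observe that equivariance makes the step variance deterministic, and apply Theorem~\ref{thm:largeballs} with $L=\sqrt{d}$. The only cosmetic difference is normalization---the paper scales $\Psi$ so that $\E\|\Psi(Z_1)-\Psi(Z_0)\|^2=1$ directly (making $\Psi$ itself $\sqrt d$-Lipschitz), whereas you normalize the sum over neighbors to $1$ and then multiply by $\sqrt d$; after this rescaling the two martingales coincide.
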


\begin{proof}
Suppose that $G$ has vertex set $V$.  Let $\Aut(G)$ denote the automorphism group of $G$.
By \cite[Thm. 3.1]{LP13}, there is a
Hilbert space $\mathcal H$ on which $\Aut(G)$ acts by isometries, and a non-constant equivariant harmonic mapping
$\Psi : V \to \mathcal H$.  In other words, one has $\sigma \Psi(x) = \Psi(\sigma x)$ for all $\sigma \in \Aut(G)$ and $x \in V$.
(In fact, one can take $\mathcal H = \ell^2(V)$ and then $\Aut(G)$ acts on $\ell^2(V)$ by permutation of the coordinates.)

In particular, for any pair of vertices $x,y \in V$, we have
\[
\E \left[\|\Psi(Z_0)-\Psi(Z_1)\|^2 \mid Z_0 = x\right] = \E \left[\|\Psi(Z_0)-\Psi(Z_1)\|^2 \mid Z_0 = y\right].
\]
Since $\Psi$ is non-constant, we may
normalize $\Psi$ so that $\E \|\Psi(Z_0)-\Psi(Z_1)\|^2=1$.
Writing \[\E\, \|\Psi(Z_0)-\Psi(Z_1)\|^2 = \frac{1}{d} \sum_{y : \{y,Z_0\} \in E} \|\Psi(Z_0)-\Psi(y)\|^2=1\,,\]
one concludes that $\Psi$ is a $\sqrt{d}$-Lipschitz mapping from $(V,\dist_G)$ into $\mathcal H$.
Additionally, since $\Psi$ is harmonic, the process $\{X_t=\Psi(Z_t)\}$ is a martingale
to which Theorem \ref{thm:largeballs} applies, with $L=\sqrt{d}$.
Thus we have
\[
\P\left(\dist(Z_0,Z_t) \leq \e \sqrt{t/d}\right) \leq \P\left(\|X_0-X_t\| \leq \e \sqrt{t}\right) \leq O(1) d^{10} \e\,.\qedhere
\]
\end{proof}

One can make a similar statement about random walks on finite vertex-transitive graphs, up to the relaxation time.
(The method of proof is also from \cite{LP13}.)

\begin{theorem}
\label{thm:finitegraphs}
Suppose $G=(V,E)$ is a finite, connected, vertex-transitive $d$-regular graph, and
$\lambda$ denotes the second-largest eigenvalue of the transition matrix of
the random walk on $G$.  Then for every $t \leq (1-\lambda)^{-1}$ and
every $\e \geq 1/\sqrt{t}$,
\[
\P\left(\dist_G(Z_0,Z_t) \leq \e\sqrt{t/d}\right) \leq O(d^{10}) \e\,.
\]
\end{theorem}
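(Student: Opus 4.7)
The plan is to emulate the proof of Theorem~\ref{thm:nonamenable}, replacing the harmonic equivariant map from \cite[Thm.~3.1]{LP13} with an eigenfunction equivariant map tailored to the second eigenvalue $\lambda$. Since $\Aut(G)$ commutes with the transition matrix $P$, the $\lambda$-eigenspace $V_\lambda\subset \ell^2(V)$ is $\Aut(G)$-invariant, and the construction of \cite{LP13} can be carried out on this subspace: it produces a Hilbert space $\mathcal H$ on which $\Aut(G)$ acts by isometries, together with a non-constant $\Aut(G)$-equivariant map $\Psi:V\to\mathcal H$ satisfying the vector-valued eigenfunction identity $\tfrac1d\sum_{y\sim x}\Psi(y)=\lambda\,\Psi(x)$. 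Vertex-transitivity forces $\|\Psi(x)\|$ to be constant, and expanding $\|\Psi(y)-\Psi(x)\|^2$ and averaging over $y\sim x$ gives $\tfrac1d\sum_{y\sim x}\|\Psi(y)-\Psi(x)\|^2 = 2(1-\lambda)\|\Psi(x)\|^2$. Normalizing this average to $1$ therefore yields both the $\sqrt d$-Lipschitz bound and the identity $\|\Psi(x)\|^2=\tfrac1{2(1-\lambda)}$.

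Next I would use $\Psi$ to build the martingale $X_s=\lambda^{-s}\Psi(Z_s)$, $0\le s\le t$. The eigenfunction identity makes $\{X_s\}$ a martingale, and the assumption $t\le(1-\lambda)^{-1}$ forces $\lambda^{-s}$ to stay bounded by a universal constant throughout (passing to the lazy walk first if needed, so that $\lambda\in[0,1]$). Writing $\Psi(Z_{s+1})-\lambda\Psi(Z_s)=(\Psi(Z_{s+1})-\Psi(Z_s))+(1-\lambda)\Psi(Z_s)$ and using both the Lipschitz bound and $\|\Psi(Z_s)\|^2=\tfrac1{2(1-\lambda)}$ gives $\|X_{s+1}-X_s\|\le L$ with $L=O(\sqrt d)$. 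A direct computation, using that $\|\Psi(\cdot)\|$ is constant, also yields the \emph{deterministic} conditional variance $\E_s\|X_{s+1}-X_s\|^2=\lambda^{-2(s+1)}(1+\lambda)/2$, which can be made $\ge 1$ by a harmless rescaling of $\Psi$ (absorbed into $L$). Thus $\{X_s\}$ satisfies hypotheses (M1) and (M2) over the window $0\le s\le t$.

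The crux is converting a graph-distance event into a norm event for the martingale. Conditional on $Z_0$, define the shifted (still) martingale $\tilde X_s=X_s-\lambda^{-t}\Psi(Z_0)$; then
\[
\tilde X_t = \lambda^{-t}\bigl(\Psi(Z_t)-\Psi(Z_0)\bigr),
\]
so on the event $\{\dist_G(Z_0,Z_t)\le\e\sqrt{t/d}\}$ the $\sqrt d$-Lipschitz property of $\Psi$ gives $\|\tilde X_t\|\le \lambda^{-t}\sqrt d\cdot\e\sqrt{t/d}\le O(1)\,\e\sqrt t=:R$. The hypothesis $\e\ge 1/\sqrt t$ guarantees $R\ge 1$, and we may assume $\e\le 1$ (else the bound is trivial), so $R\le\sqrt t$. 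Applying Theorem~\ref{thm:largeballs} to $\tilde X_s$ with $L=O(\sqrt d)$ and discarding the nonnegative Gaussian factor therefore yields
\[
\P\bigl(\dist_G(Z_0,Z_t)\le\e\sqrt{t/d}\bigr) \;\le\; \P(\|\tilde X_t\|\le R) \;\le\; O(L^{20})\,\frac{R}{\sqrt t} \;=\; O(d^{10})\,\e.
\]

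The main obstacle is the first step --- producing the equivariant eigenfunction map $\Psi$ with the required Lipschitz and norm properties, paralleling \cite[Thm.~3.1]{LP13} but on the second eigenspace rather than the space of harmonic functions. After that, everything is bookkeeping: one must track the factor $\lambda^{-s}$ and verify that it remains bounded over $s\le(1-\lambda)^{-1}$, which is precisely why the theorem is stated only up to the relaxation time.
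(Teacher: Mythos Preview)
Your proposal is correct and follows essentially the same route as the paper: build an $\Aut(G)$-equivariant $\lambda$-eigenfunction map $\Psi$, form the martingale $X_s=\lambda^{-s}\Psi(Z_s)$, verify (M1)--(M2) using that $\lambda^{-s}$ stays bounded for $s\le(1-\lambda)^{-1}$, and invoke Theorem~\ref{thm:largeballs}. The only cosmetic differences are that the paper writes $\Psi$ down explicitly as the orbit map $\Psi(v)=\textrm{const}\cdot(\psi(\sigma v))_{\sigma\in\Aut(G)}$ rather than appealing to the \cite{LP13} machinery, and disposes of small $\lambda$ by noting the statement is vacuous when $\lambda<\tfrac12$ rather than passing to the lazy walk.
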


\begin{proof}
We may assume that $\lambda \geq \frac12$, else the statement is vacuously true.
Let $P$ be the transition matrix of the random walk on $G$, and
let $\psi : V \to \mathbb R$ be an eigenfunction of $P$ with eigenvalue $\frac12 \leq \lambda < 1$ and norm-squared $\sum_{u \in V} \psi(u)^2 = 1$.
First, observe that
\begin{eqnarray}
\sum_{u \in V} \frac{1}{d} \sum_{v : \{u,v\} \in E} |\lambda \psi(u)-\psi(v)|^2 &=& \sum_{u \in V} (1+\lambda^2) \psi(u)^2 - 2 \lambda \sum_{u \in V} \psi(u) \frac{1}{d} \sum_{v:\{u,v\} \in E} \psi(v) \nonumber \\
&=& \langle \psi, ((1+\lambda^2)I - 2\lambda P)\psi \rangle \nonumber \\
&=& 1+\lambda^2 - 2\lambda^2 \nonumber \\
&=& 1 - \lambda^2\,.\label{eq:eigexpand}
\end{eqnarray}

Consider the automorphism group $\Aut(G)$ of $G$ and
define the map $\Psi : V \to \mathbb R^{|\Aut(G)|}$ by
\[
\Psi(v) = \frac{n}{|\Aut(G)|} \cdot \frac{\left(\psi(\sigma v)\right)_{\sigma \in \Aut(G)}}{1-\lambda^2}\,.
\]
%
We claim that the process $\{\lambda^{-t} \Psi(Z_t)\}$ is a martingale.
This follows from the fact that $\{\lambda^{-t} \psi(Z_t)\}$ is a martingale, which
can easily be checked:
\[
\E [\lambda^{-t-1} \psi(Z_{t+1}) \mid Z_t] = \lambda^{-t-1} (P\psi) (Z_t) = \lambda^{-t} \psi(Z_t)\,.
\]
Next, observe that
\begin{eqnarray}
\E \left[\|\lambda^{-t-1} \Psi(Z_{t+1}) - \lambda^{-t} \Psi(Z_t)\|^2 \mid Z_t\right] \nonumber &=&
\lambda^{-2(t-1)} \E\left[\|\lambda \Psi(Z_t) - \Psi(Z_{t+1})\|^2 \mid Z_t\right] \nonumber \\
&=&
\lambda^{-2(t-1)} (1-\lambda^2)^{-1} \sum_{u \in V} \frac{1}{d} \sum_{v : \{u,v\} \in E} |\lambda \psi(u)-\psi(v)|^2 \nonumber \\
&=&
\lambda^{-2(t-1)}\,, \label{eq:onestep}
\end{eqnarray}
where the final line uses \eqref{eq:eigexpand}.

From this, we learn two things.
First, for $t \geq 1$,
\begin{eqnarray*}
\E \left[\|\lambda^{-t-1} \Psi(Z_{t+1}) - \lambda^{-t} \Psi(Z_t)\|^2 \mid Z_t\right] &=& \lambda^{-2(t-1)} \geq 1\,.
\end{eqnarray*}
Secondly, we have a Lipschitz condition for small times:
Consider $t \leq (1-\lambda)^{-1}$ and $\{u,v\} \in E$.  Then using \eqref{eq:onestep}, we have
\begin{eqnarray*}
\|\lambda^{-t-1} \Psi(u) - \lambda^{-t} \Psi(v)\| &\leq& \sqrt{d} \cdot \left(\E \left[\|\lambda^{-t-1} \Psi(Z_{t+1}) - \lambda^{-t} \Psi(Z_t)\|^2 \mid Z_t=u\right]\vphantom{\bigoplus}\right)^{1/2}
\\
&\leq& \lambda^{-(t-1)} \leq \sqrt{d} \lambda^{\frac{-1}{1-\lambda}}
\leq 4\sqrt{d}\,,
\end{eqnarray*}
where we have used the fact that $1 \geq \lambda \geq \frac12$.

Now applying Theorem \ref{thm:largeballs} to the martingale $\{X_t = \lambda^{-t} \Psi(Z_t)\}$ for times $t \leq (1-\lambda)^{-1}$,
we see that
\[
\P\left(\dist_G(Z_0,Z_t) \leq \e\sqrt{t/d}\right) \leq \P(\|X_t-X_0\| \leq 4\e\sqrt{t}) \leq O(d^{10}) \e\,.\qedhere
\]
\end{proof}

\bibliographystyle{alpha}
\bibliography{smallball}

\end{document}